\definecolor {processblue}{cmyk}{0,0,0,0.17}
\definecolor{light-gray}{gray}{0.85}
\newtheorem{thm}{Theorem}
\newtheorem{lem}[thm]{Lemma}
\newcommand{\bphi}{{\boldsymbol \phi}}
\newcommand{\bzero}{{\boldsymbol 0}}
\begin{document}
\title{\huge Energy Storage Sizing for Peak Hour Utility Applications}

\author{\IEEEauthorblockN{I. Safak Bayram\IEEEauthorrefmark{1},
Ali Tajer\IEEEauthorrefmark{2},
Mohamed Abdallah\IEEEauthorrefmark{1},
and Khalid Qaraqe\IEEEauthorrefmark{1}
\IEEEauthorblockA{\IEEEauthorrefmark{1}Department of Electrical and Computer Engineering, Texas A\&M University at Qatar, Doha, Qatar}\IEEEauthorblockA{\IEEEauthorrefmark{2}Department of Electrical, Computer, and Systems Engineering, Rensselaer Polytechnic Institute, Troy, NY, USA}
Emails: \{islam.bayram, mohamed.abdallah, khalid.qaraqe\}@qatar.tamu.edu, tajer@ecse.rpi.edu %}
%\email{isbayram, mdevets, sbhatta$4$, achakra$2@ncsu.edu$ $gmichail@umich.edu$
}

}

% make the title area
\maketitle

% As a general rule, do not put math, special symbols or citations
% in the abstract or keywords.

\begin{abstract}
In future smart grids, energy storage systems (ESSs) are expected to play a key role in reducing peak hour electricity generation cost and the associated level of carbon emissions. Considering their high acquisition, operation, and maintenance costs, ESSs are likely to serve a large number of users. Hence, optimal sizing of energy ESSs plays a critical role as over-provisioning ESS size leads to under-utilizing costly assets and under-provisioning it taxes operation lifetime. This paper proposes a stochastic framework for analyzing the optimal size of energy storage systems. In this framework the demand of each customer is modeled stochastically and the aggregate demand is accommodated by a combination of power drawn from the grid and the storage unit when the demand exceed grid capacity. In this framework an analytical method is developed, which provides tractable solution to the ESS sizing problem of interest. The results indicate that significant savings in terms of ESS size can be achieved.

\end{abstract} 

\IEEEpeerreviewmaketitle

\section{Introduction}
There is a growing need for reducing the use of hydrocarbons and the cost of electricity during electricity consumption peak hours. One effective way to achieve this, is deploying energy storage systems (ESSs) which can store lower cost energy, through either renewables or off-peak hour grid power, and discharge the stored energy into the grid during peak load periods. Furthermore, storage units can improve power system reliability by supplying standby power during outages and reduce the load on the equipments, thereby decreasing the aging pace of network components.  As other benefits, energy storage can foster the adoption of intermittent distributed energy generation into the distribution network, aid grid operations by improving power quality (e.g., mitigating voltage sags and flickers) and efficiency, regulating frequency, and enabling active customer involvement in demand response programs. Some of the benefits of deploying energy storage systems are summarized in Table \ref{ESSBenefits}.

While deploying storage units has certain benefits, their deployment based on the existing ESS technologies \cite{sandiaLabs} is costly. Therefore, optimal sizing of the storage units based on the realistic needs of the grids is a critical step for efficient operation of the grid. Specifically, over-provisioning ESS size entails costly and underutilized assets, whereas under-provisioning reduces its operating lifetime (e.g., frequently exceeding allowable depth of charge level degrades its health). Hence, there is a strong need to develop analytical models to solve the sizing problem.

ESS sizing has received some attention in the literature. The work in~\cite{oudalov2007sizing} presents a sizing approach for {\em single} industrial customers for peak saving applications. The sizing problem is solved through maximizing the net benefits, which is the sum of reductions in the electricity bills minus the operation costs and one time acquisition cost. Similarly, \cite{sizingNAN} proposes a sizing framework using similar cost models for a micro grid, but it also consider savings due to storage of energy generated by renewable resources. From the power engineering point of view, the sizing problem is usually solved via simulation techniques \cite{lukicESS} and for wind farm applications, ESS is used to convert such intermittent and non-dispatchable sources into dispatchable ones \cite{sizing1,sizing2}. However, simulations techniques are usually computationally expensive and the success of the proper sizing requires availability of data traces.

\begin{table}[t]
\caption{Benefits of Energy Storage Systems by Users \cite{eckroad2003epri}}\label{ESSBenefits}
\vspace{-0pt}
\begin{tabular}{m{2.5cm}m{5.5cm}}
\toprule
User & Benefit\\
\midrule
\multirow{3}{*} {Utilities} & $\bullet$ Improved responsiveness of the supply.  \\
&$\bullet$ Eliminate the usage of peaking power plants.\\
&$\bullet$ Improved operations of transmission and distribution systems.\\\hline
\multirow{2} {*}{End-users} & $\bullet$ Reduced electricity costs.     \\
&$\bullet$ Reduced financial losses due to outages.\\\hline\vspace{.05 in}
Independent System Operators\multirow{2}{*} & $\bullet$ Load balancing among regions. \\
& $\bullet$ Stabilization of transmission systems. \\
\bottomrule
\end{tabular}
%\vspace{-10pt}
\end{table}

In this paper, we develop an analytical framework for optimal energy storage sizing. The proposed framework contributes to the existing literature in two ways:
\begin{itemize}
\item The existing analytical methods for storage sizing focus on settings with {\em one} customer. The proposed framework can cope with any network with arbitrary number of consumers. Our analytical results show that a community-level design of storage units exhibits substantial gains over user-level design.
\item The existing methods for multi-consumer settings are simulation-based. The advantage of the proposed analytical method is that it establishes the exact optimal sizing, and subsequently, are computationally less expensive.
\end{itemize}
In the proposed framework consumers' demands are modeled as Markovian fluid and the analysis rely on  stochastic theory of fluid dynamics. We establish the interplay among minimum amount of storage size, the grid capacity, number of consumers, and the stochastic guarantees on outage events.

We note that studying storage units in a network level is of paramount significance as they are expected to become integral to smart energy grids. More specifically, ESS will be employed at smart residential and business complexes and university campuses, to name a few, in order to reduce peak hour consumption. Clearly, in such sharing-based applications, the size of the energy storage is linked to the customer population and the load profile. This relation is far from being linear due to multiplexing gains which is computed by the percentage of reduction in the required amount of resources with respect to baseline case of assigning peak demand to each user.

%The methodology proposed in this paper has similarities with the resource provisioning problem in Asynchronous Transfer Mode (ATM) networks, in which users fill up a buffer with packets and a constant rate server processes requests and empties the buffer. During the periods of heavy traffics, some of the arriving packets may find a full buffer and hence cannot enter the system. The goal is to compute this overflow probability with respect to different set of system parameters (e.g., number of users and server capacity) \cite{anick1,morrison1989}. On the other hand, there are several significant differences. In ATM networks, the buffer occupancy is discrete as users generate discrete amount of information and the size of each packet is fixed. On the other hand, the charge level of an ESS is a continuous time variable. Also, the charging/discharging of a buffer is a linear entity and proportional to the corresponding amount, whereas in ESS due to chemical material properties charging/discharging durations of different state-of-charge levels may differ (e..g, charging from $30$ to $40$\% might be different from $80$ to $90$\%.
%\vspace{-10pt}
%Furthermore, our assumption on modeling the individual user demand with an ``On/Off'' fluid model has been made in \cite{sgc13,model2State,rosenberg} and \cite{onOffModels2}  in which solve the power resource provisioning problem is solved in applicants without ESS.

\begin{table}[t]
  \centering
  \caption{Notations}
    \begin{tabular}{p{0.99cm}  p{6.5cm} }
    \toprule
    Parameter & Description \\
    \midrule
    $C$     & Power drawn from grid. \\
    $N$     & Number of users. Note that this is not the number of houses since in one house there can be multiple appliances requesting demand. \\
    $R_p$     & Demand of a user, the same for all users.\\
    $\lambda$ & Arrival rate of charge request, parameter for Poisson process. \\
    $\mu$ & Mean service rate for the customer demand.\\
    $B$  & Size of the energy storage unit. In the normalized model measured in $R_p$$\mu^{-1}$.\\
    $S(t)$  & ESS depletion level, $0$$\leq$$S(t)$$\leq$$B$. \\
    $L_{i}(t)$ & Aggregated load on the system when $i$ users are ``On??.\\
     ${F_i}(x)$& Steady state cumulative probability distribution function of ESS charge level. \\
    $\varsigma$&Grid power allocated per source ($C$/$N$).\\
    $\kappa$ & ESS per user ($B$/$N$).\\
    \bottomrule
    \end{tabular}
  \label{summary}%

\end{table}%

 \section{System Description}
Consider a community of consumers in which the demands of $i\in\left\{ {1,2,...,N} \right\}$ users are accommodated by the power grid in conjunction with a {\em shared} energy storage system unit of size $B$. We consider a dynamic model for grid capacity, in which capacity fluctuates over time, and the capacity at time $t$ is denoted by $C_t$. for $t\in\mathds{R}^+$. As established in~\cite{sgc13,rosenberg,onOffModels2, Richardson} the consumption pattern of each consumer can be well-represented by a two-state ``On/Off'' process. We define the binary variable $s^i_t$ to represent the state of consumer $i$ at time $t$ such that
\begin{equation}
s^i_t=\left\{
\begin{array}{ll}
1 & \mbox{consumer $i$ is On}\\
0 & \mbox{consumer $i$ is Off}
\end{array}\right. \ .
\end{equation}
When a customer is in the ``On'' state, it initiates an energy demand, where the duration of demand is modeled statistically, which is adopted to capture the variety types of consumers' demands. Specifically, the duration of each customer's demand is assumed to be exponentially distributed with parameter $\mu$. Furthermore we also assume that the requests, which are transitions from ``Off'' to ``On'', are generated randomly and according to a Poisson process with parameter $\lambda$.  Hence,
for each consumer $i$ at any time $t$ we have
\begin{equation}
\mathds{P}(s^i_t\;=\;1)\;=\; \frac{\lambda}{\lambda+\mu}\ .
\end{equation}
Finally, we define $R_p$ as the energy demand per time unit\footnote{We remark that generalization to the settings in which there are multiple customer classes with different demand rates is straightforward and is omitted due to space limitations.}.

In order to formalize the dynamics of ESS, we define $L_i(t)$ and $S(t)$ as the charge request of consumer $i$, and the storage level at the storage unit, respectively. Hence, for the rate of change in the storage level of the ESS, the following holds

{\small
\begin{equation}
\frac{{d{S}(t)}}{{dt}} = \left\{ {\begin{array}{ll}
0&\mbox{if}\;{S}(t)=B\;\mbox{\&}\;\sum_{i=1}^N L_i(t)<C_t\vspace{.1 in}\\
0&\mbox{if}\;{S}(t)=0\;\mbox{\&}\;\sum_{i=1}^N L_i(t) >C_t\vspace{.1 in}\\
{ {{C_t-{\sum\nolimits_i L}_i}(t) }}&\mbox{otherwise}
\end{array}} \right. \ .
\end{equation}
}
Due to stochasticities involved (in consumption and generation), by choosing any storage capacity $B$, only stochastic guarantees can be provided for the reliability of the system and always there exists a chance of outage, which occurs when available resources fall below the aggregate demands by the consumers. By noting that $S(t)$ denotes the energy level that the storage unit needs to feed into the grid to avoid outage, an outage even occurs when the necessary load from the storage unit exceed the maximum available $B$. Hence, we define $\varepsilon$-outage storage capacity, denoted by $B(\epsilon)$, as the smallest choice of $B$ corresponding to which the probability of outage does not exceed $\epsilon\in(0,1)$, i.e.,
\begin{equation}\label{eq:problem}
B(\varepsilon)=\left\{
\begin{array}{ll}
\min & B\vspace{.1 in}\\
{\rm s.t.} & \mathds{P}\big(S_t>B\big)\leq \varepsilon
%\vspace{.1 in}\\
% & S_t\leq B
\end{array}\right. \ .
\end{equation}

Our goal is to determine the $\epsilon$-outage storage capacity $B_\epsilon$ based on grid capacity $C_t$, number of users $N$, and their associate consumption dynamics. The notations are summarized in Table \ref{summary}.

%\begin{figure}[t]
%\centering
%\includegraphics[width=0.8\columnwidth]{figures/onOffSept.eps}
% \caption{System model conceptual overview}\label{provision}
% \vspace{-15pt}
%\end{figure}

\section{Storage Capacity Analysis}

\subsection{Storage Access Dynamics}
When grid can serve all the consumers' demands there will be no consumer served by the storage unit. On the other hand, when the grid capacity falls below the aggregate demand, the consumers access the storage unit. Since the requests of the consumers arrive randomly, the number of consumers accessing the unit will also vary randomly.

Since we have $N$ {\em independent} consumers each with a two-state model, by taking into account their underlying arrival and consumption processes, the composite model counting the number of users accessing the storage unit at a given time can be modeled as a {\em continuous}-time birth-death process. Specifically, this process consists of $(N+1)$ states, in which state $j\in\{0,\dots,N\}$ models $j$ consumers being active and accessing the storage unit, i.e.,
\begin{equation}
\mbox{state at time $t$ is $j$ if}\quad \sum_{i=1}^Ns^i_t=j\ ,
\end{equation}

and drawing $jR_p$ units of power from the storage unit. As depicted in Fig.~\ref{MMPP} the transition rate from state $j$ to state $j+1$ is $(N-j)\lambda$ and, conversely, the transition from state $j+1$ to state $j$ is $(j+1)\mu$. Hence, for the  associated infinitesimal generator matrix $M$, in which the row elements sum to zero, for $i,j\in\{1,\dots,N+1\}$ we have
{\small
\begin{equation}\label{matrix}
M[i,j]=\left\{
\begin{array}{ll}
-(N-i+1)\lambda -(i-1)\mu & j=i \vspace{.1 in}\\
(i-1)\mu & j=i-1\;\;\&\;\; i\geq 2\vspace{.1 in}\\
(N-i+1)\lambda & j=i+1\;\;\&\;\; i\leq N\vspace{.1 in}\\
0 & \mbox{otherwise}
\end{array}\right. \ .
\end{equation}
}
By denoting the stationary probabilities of state $j\in\{0,\dots,N\}$ by $\pi_j$ and according defining $\bm{\pi}  = \left[ {{\pi _0},{\pi _1},...,{\pi _N}} \right]$, these stationary probability values satisfy $\bm{\pi} M=\bzero$.

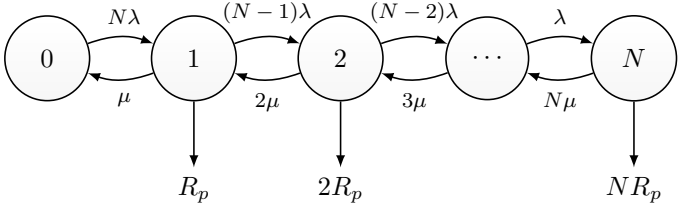
\begin{figure}[t]
\centering
\begin{tikzpicture}[-latex ,auto ,node distance =1.7 cm and 1.95cm ,on grid ,
semithick ,
state/.style ={ circle ,top color =white , bottom color = processblue!20 ,
draw,black , text=black , align=center,text width =0.8 cm, minimum size=12pt}]
\node[state] (C){$0$};
\node[state]       (v1)  [right =of C]   {$1$};
\node[state]       (v2)  [right =of v1]   {$2$};
\node[state]       (v3)  [right =of v2]   {$\cdots$};
\node[state]       (v4)  [right =of v3]   {$N$};
%\node[state]       (v5)  [right =of v4]   {$N$};
\node[draw=none,below= 1.75cm of v1]  (v1x) {${R_p} $};
\node[draw=none,below= 1.75cm of v2]  (v2x) {${2R_p} $};
\node[draw=none,below= 1.75cm of v4]  (v4x) {${NR_p} $};
%\node[draw=none,below= 1.05cm of v5]  (v5x) {${NR_p}$};
\path (C) edge [bend left =20] node[above =0.05 cm, pos=0.5]  {\begin{footnotesize} $N\lambda$\end{footnotesize}  }  (v1);
\path (v1) edge [bend right = -20] node[below =0.05 cm, pos=0.5] {\begin{footnotesize} $\mu$\end{footnotesize}  }(C);
\path (v1) edge [bend left =20] node[above =0.05 cm, pos=0.5] {\begin{footnotesize}$(N-1)\lambda$\end{footnotesize}}  (v2);
\path (v2) edge [bend right = -20] node[below =0.05 cm, pos=0.5] {\begin{footnotesize}$2\mu$\end{footnotesize}}  (v1);
\path (v2) edge [bend left =20] node[above =0.05 cm, pos=0.5] {\begin{footnotesize}$(N-2)\lambda$\end{footnotesize}} (v3);
\path (v3) edge [bend right = -20] node[below =0.05 cm, pos=0.5] {\begin{footnotesize}$3\mu$\end{footnotesize}} (v2);
\path (v3) edge [bend left =20] node[above =0.05 cm, pos=0.5] {\begin{footnotesize}$\lambda$\end{footnotesize}} (v4);
\path (v4) edge [bend right = -20] node[below =0.05 cm, pos=0.5] {\begin{footnotesize}$N\mu$\end{footnotesize}} (v3);
\path    (v1) edge       node[left = 0.05 cm] {} (v1x);
\path    (v2) edge       node[left = 0.05 cm] {} (v2x);
\path    (v4) edge       node[left = 0.05 cm] {} (v4x);

\end{tikzpicture}

\caption{Composite model for $N$ independent users. Each user becomes active (``On'') at rate $\lambda$ and becomes inactive (``Off'') at rate $\mu$. The aggregate demand depends on the active number of users. }\label{MMPP}
%\vspace{-12pt}
\end{figure}

\subsection{Analyzing Distributions}

Given the dynamics of accessing the storage unit, in the next step we analyze the statistical behavior of the ESS charge level. Specifically, we define ${F_i}(t,x)$ as the cumulative distribution function (cdf) of the ESS charge level when $i\in\{0,\dots,N\}$ consumers are depleting the storage unit at time $t$, i.e.,
\begin{equation}\label{eq:cdf}
{F_i}(t,x)\;=\; \mathds{P}\Big (S(t) \leq x \;\;\;\mbox{and}\;\;\; \sum_{j=1}^Ns^j_t=i \Big)\ .
\end{equation}
Accordingly, we define the vector of cdfs as
\begin{equation}
\boldsymbol{F}(t,x)\triangleq\left[ {{F_0}(t,x)\; , \; {F_1}(t,x)\; , \;... \; , \;  {F_N}(t,x)} \right]\ .
\end{equation}
Based on this definition, the next lemma delineates a differential equation which admits the cdf vector as its solution and is instrumental to analyzing the probability of outage events, i.e.,  $\mathds{P}\big(\sum_{i=}^NL_i(t)\;>\; C_t+B\big)$.
\begin{lem}
The cdf vector ${\boldsymbol F}(t,x)$ satisfies
\begin{equation}\label{diffEq}
\frac{d{\boldsymbol F}(t,x)}{dx}\cdot D = {\boldsymbol F}(t,x)\cdot M\ ,
\end{equation}
where $D$ is a diagonal matrix defined as
\begin{equation}\label{D}
D\triangleq \mbox{\rm diag}\left[ { - C_t\mu\; , \; (1 - C_t)\mu\; , \; ... \; , (N - C_t)\mu } \right]\ ,
\end{equation}
and matrix $M$ is defined in \eqref{matrix}.
\end{lem}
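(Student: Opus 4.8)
The plan is to derive \eqref{diffEq} as the stationary forward (Fokker--Planck) equation of the joint Markov process $\big(S(t),N_t\big)$, where $N_t\triangleq\sum_{j=1}^N s^j_t\in\{0,\dots,N\}$ is the birth--death chain of Fig.~\ref{MMPP} with generator $M$. The structural fact driving the argument is that, at interior storage levels $0<S(t)<B$, the level evolves \emph{deterministically at a constant slope} for as long as $N_t$ does not change: conditioned on $N_t=i$ and on no chain transition in $[t,t+\Delta t]$, one has $S(t+\Delta t)=S(t)+(i-C_t)\mu\,\Delta t+o(\Delta t)$, i.e.\ the per-state drift of $S$ equals the $(i{+}1)$-st diagonal entry of $D$ in \eqref{D}. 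This is exactly where the normalization that measures $S$ in units of $R_p\mu^{-1}$ (and $C_t$ in units of $R_p$) enters, and where the sign convention of \eqref{D} must be tracked.

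With this in hand I would write a one-step decomposition of $F_i(t+\Delta t,x)=\mathds{P}\big(S(t+\Delta t)\le x,\ N_{t+\Delta t}=i\big)$ by conditioning on $N_t$ and on the number of chain transitions in $[t,t+\Delta t]$. Up to $o(\Delta t)$ the chain either stays in $i$ (probability $1-((N-i)\lambda+i\mu)\Delta t$), enters $i$ from $i-1$ through a single birth (probability $(N-i+1)\lambda\,\Delta t$), enters $i$ from $i+1$ through a single death (probability $(i+1)\mu\,\Delta t$), or makes at least two jumps (probability $o(\Delta t)$). In the first case the slope is constant, so $\{S(t+\Delta t)\le x\}=\{S(t)\le x-(i-C_t)\mu\,\Delta t\}$ modulo $o(\Delta t)$; in the birth/death cases the $O(\Delta t)$ displacement of $S$ is multiplied by a probability that is already $O(\Delta t)$ and hence folds into $o(\Delta t)$, leaving $F_{i-1}(t,x)$ and $F_{i+1}(t,x)$. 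Collecting terms gives
\begin{equation*}
\begin{split}
F_i(t+\Delta t,x)&=\big(1-((N-i)\lambda+i\mu)\Delta t\big)\,F_i\!\big(t,x-(i-C_t)\mu\,\Delta t\big)\\
&\quad+(N-i+1)\lambda\,\Delta t\,F_{i-1}(t,x)+(i+1)\mu\,\Delta t\,F_{i+1}(t,x)+o(\Delta t)\ .
\end{split}
\end{equation*}

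I would then Taylor-expand $F_i$ in both arguments, cancel $F_i(t,x)$, divide by $\Delta t$ and let $\Delta t\to0$, obtaining for each $i$ the scalar identity $\partial_tF_i+(i-C_t)\mu\,\partial_xF_i=-\big((N-i)\lambda+i\mu\big)F_i+(N-i+1)\lambda F_{i-1}+(i+1)\mu F_{i+1}$. Reading the right side against \eqref{matrix} shows it is the corresponding entry of ${\boldsymbol F}(t,x)\,M$, and the left side is the corresponding entry of $\partial_t{\boldsymbol F}(t,x)+\frac{d{\boldsymbol F}(t,x)}{dx}\,D$; specializing to the stationary regime (equivalently, conditioning on the current value of $C_t$ and seeking the time-invariant law of $S$) kills the $\partial_t$ term and leaves $\frac{d{\boldsymbol F}(t,x)}{dx}\,D={\boldsymbol F}(t,x)\,M$, which is \eqref{diffEq}.

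The part I expect to require the most care is the boundary behavior together with the regularity needed to Taylor-expand. The reflecting branches of the storage dynamics at $S=0$ and $S=B$ can place atoms in the law of $S(t)$, so $x\mapsto F_i(t,x)$ is differentiable only on the open interval $(0,B)$; accordingly I would run the infinitesimal argument at interior $x$ only and record that the boundary branches enter the analysis solely through the boundary values ${\boldsymbol F}(t,0)$ and ${\boldsymbol F}(t,B)$ rather than modifying \eqref{diffEq}. Beyond that, the only remaining effort is keeping the sign and unit conventions aligned so that the per-state drift matches the diagonal of $D$ in \eqref{D} verbatim; once that is fixed, the computation is the classical fluid-flow (Anick--Mitra--Sondhi-type) derivation specialized to the birth--death environment of Fig.~\ref{MMPP}.
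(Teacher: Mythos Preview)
Your proposal is correct and follows essentially the same approach as the paper: the paper's proof is precisely the infinitesimal decomposition of $F_i(t+\Delta t,x)$ into the three birth/death/no-change cases with the shifted argument $x-(i-C_t)\mu\,\Delta t$ in the no-change term, followed by passage to the limit, imposition of steady state, and concatenation into matrix form. Your added remarks on boundary atoms and interior regularity go beyond what the paper records, but the core derivation is the same Anick--Mitra--Sondhi-type argument you outline.
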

\begin{proof}

In order to compute the probability density functions, we find the expansion of $F_i(t,x)$ for an incremental change $\Delta t$ in $t$, i.e, ${F_i}(t + \Delta t,x)$. Note that during incremental time $\Delta t$, three elementary events can occur
\begin{enumerate}
\item one inactive consumer might become active, i.e., $i$ increases to $i+1$;
\item one active consumer might become inactive,  i.e., $i$ reduces to $i-1$; or
\item the number of active consumers remains unchanged.
\end{enumerate}
Since the periods of arrival and departure of consumers are exponentially distributed, corresponding to these events, cdf $F_i(t,x)$ can  be expanded to
\begin{align}\label{generating}
\nonumber {F_i} & (t + \Delta t,x) \\ \nonumber
&= \underbrace{\left[ {N - (i - 1)} \right]\cdot  (\lambda \Delta t)\cdot {F_{i - 1}}(t,x)}_{\text{one consumer added}}\\
\nonumber & + \underbrace{[i + 1]\cdot(\mu \Delta t)\cdot {F_{i + 1}}(t,x)}_{\text{one consumer removed}} \\
\nonumber &+ \underbrace{\left[ {1 - \left( {(N - i)\lambda  + i\mu } \right)\Delta t} \right]\cdot {F_i}(t,x - (i - C_t)\cdot \mu \Delta t)}_{\text{no change}}\\
&  + o\left( {\Delta t^2} \right)\ ,
\end{align}
where $ o\left( {\Delta t^2} \right)$ represents the probabilities of the compound events and tends to zero more rapidly than ${\Delta t^2}$ (and $\Delta t$) as $\Delta t \to 0$. Next, by passing the limit
$$\mathop {\lim }\limits_{\Delta t \to 0} \frac{{{F_i}(t + \Delta t,x)}}{{\Delta t}}$$
it can be readily verified that (\ref{generating}) simplifies to
\begin{align}\label{derives}
\nonumber  \frac{{\partial {F_i}(x,t)}}{{\partial t}} & = \left[ {N - (i - 1)} \right]\cdot  (\lambda)\cdot {F_{i - 1}}(t,x)\\
\nonumber & + [i + 1]\cdot(\mu)\cdot {F_{i + 1}}(t,x)\\
\nonumber & - \left[ {(N - i)\lambda  + i\mu } \right]\cdot {F_i}(t,x)\\
 & - (i - C_t)\cdot(\mu)\cdot \frac{{\partial F_i(t,x)}}{{\partial x}}\ ,
\end{align}
where we have defined ${F_{ - 1}}(t,x)= {F_{N + 1}}(t,x) =0$. Recall that, the main objective is to compute the ESS size that will operate over a long time period. Therefore, it is further assumed that steady state condition holds, that is ${{\partial {F_i}(x,t)} \mathord{\left/
 {\vphantom {{\partial {F_i}(x,t)} {\partial t = 0}}} \right.
 \kern-\nulldelimiterspace} {\partial t = 0}}$.

Hence, (\ref{derives}) can be rewritten as

\begin{align}\label{recursion}
\nonumber (i - C_t)\cdot(\mu)\cdot \frac{{\partial F_i(t,x)}}{{\partial x}} & = \left[ {N - (i - 1)} \right]\cdot  (\lambda)\cdot {F_{i - 1}}(t,x)\\ \nonumber
\nonumber & + [i + 1]\cdot(\mu)\cdot {F_{i + 1}}(t,x)\\
 & - \left[ {(N - i)\lambda  + i\mu } \right]\cdot {F_i}(t,x)\ .
\end{align}
By concatenating all the equations \eqref{recursion} for all $i\in\{0,\dots,N\}$ we obtain the compact form

\begin{equation}\label{diffEq}
\frac{d{\boldsymbol F}(t,x)}{dx}\cdot D = {\boldsymbol F}(t,x)\cdot M\ .
\end{equation}
\end{proof}
The solution of the first order differential equation given in \eqref{diffEq} can be expressed as a sum of exponential terms. The general solution requires computing $(N+1)$ eigenvalues of the matrix $MD^{-1}$ and the general solution is expressed as~\cite{anick1}:
\begin{equation}\label{diff2}
\boldsymbol{F}(t,x) = \sum_{i = 0}^N {{\alpha}_i}\;{\bphi_i}\;\exp(z_ix)\ ,
\end{equation}
where $z_i$ is the $i^{th}$ eigenvalue of $MD^{-1}$ with the associated eigenvector ${\bphi _i}$ which satisfy ${z_i}{\bphi _i}D = {\bphi _i}M$. The coefficients $\{\alpha_0,\dots,\alpha_N\}$ are determined by the boundary conditions, e.g., $F_i(t,0)=0$ and $F_i(t,\infty)=1$.

In order to compute the probability distribution in \eqref{diff2}, we need to determine the eigenvalues of $MD^{{-1}}$, the eigenvectors $\bphi_i$, and coefficients $\alpha_i$.
%For the sake of analysis, we assume that the energy storage size infinite.
We notice that, since $x \geq 0$ and $F_{j}(x)$ is upper bounded by $1$, all of the positive eigenvalues and the corresponding $\alpha_i$ must be set to zero, hence this greatly reduces required computational effort and \eqref{diff2} simplifies to
\begin{equation}\label{diff4}
\boldsymbol{F}(t,x) = \sum_{i:Re[z_{i}\leq0]} {{\alpha}_i}\;{\bphi_i}\;\exp(z_ix)\ ,
\end{equation}

We further notice that from ${z_i}{\bphi _i}D = {\bphi _i}M$, one of the eigenvalues must be zero. Then by setting $z_{0}=0$, the corresponding eigenvector can be computed from ${\bphi _0} M=\bzero$. But, recall from the previous discussion that the steady state probability distribution $\bm{\pi}$ of the $N+1$ state Markov chain can also be computed from the same equation, that is $\bm{\pi} M=\bzero$. Since, the eigenvector ${\bphi _0}$ is known and one of the eigenvalues is $z_{0}=0$, we can write ${\bphi _0}=\bm{\pi}$. Therefore, \eqref{diff4} further simplifies to \cite{schwartz1996}
\begin{equation}\label{diff5}
\boldsymbol{F}(t,x) = \boldsymbol{\pi} +\sum_{i:Re[z_{i}< 0]} {{\alpha}_i}\;{\bphi_i}\;\exp(z_ix)\ ,
\end{equation}

%\begin{cor}
%For the sub case of \eqref{diff2}, that is the probability that storage charge level is less than or equal to  $x$, when $j \in \{0,1,\dots,N\}$ sources are ``On'' can be computed from:
%\begin{equation}\label{diff3}
%{F_{j}(x)} = \sum\nolimits_{i = 0}^N {{\alpha _i}{\bphi _{ij}}{e^{{z_i}x}}},
%\end{equation}
%where $\bphi _{ij}$ is the $j^{{th}}$ component of the eigenvector $\bphi_i(x) \triangleq \left[ \bphi_{i0},\bphi_{i1},\dots,\bphi_{ij},\dots,\bphi_{iN} \right]$.
%\end{cor}

\subsection{Single User Storage Capacity ($N=1$)}

In order to establish how to compute the desired $\varepsilon$-outage storage capacity $B(\varepsilon)$ by leveraging the cdf vector found in \eqref{diff5} we start by a simple network with a single user ($N=1$). The insights gained can be leveraged to generalize the approach for networks with any arbitrary size $N$. When $N=1$ the infinitesimal generator matrix $M$ defined in \eqref{matrix} is
\begin{equation}
M\;=\;\begin{bmatrix}
  - \lambda     & \lambda \\
\mu & -\mu
\end{bmatrix}\ .
\end{equation}
For finding the expansion of $\boldsymbol{F}(t,x)$ as given in \eqref{diff2} we need to find $z_0$ and $z_1$ as the eigenvalues of $MD^{-1}$, where $D$ is defined in \eqref{D}. Based on \eqref{D} we find that
\begin{equation}
MD^{-1}\;=\;
\begin{bmatrix}
  \frac{1}{C_t}\cdot \frac{\lambda}{\mu}     & -\frac{1}{1-C_t}\cdot \frac{\lambda}{\mu} \\
-\frac{1}{C_t} & -\frac{1}{1-C_t}
\end{bmatrix}\ .
\end{equation}
Hence, the eigenvalues are
\begin{equation}\label{eigenValue}
z_0=0\quad\mbox{and}\quad z_1=\frac{\chi}{C_t}-\frac{1}{{1 - C_t}}
\end{equation}
where we have defined $\chi\triangleq \frac{\lambda}{\mu}$. It can be readily verified that the eigenvector associated with $z_1$ is $\bphi_1=[1-C_t\;,\; C_t]$. Therefore, according to \eqref{diff5} we have
\begin{align}\label{diff6}
\boldsymbol{F}(t,x) = \boldsymbol{\pi} +{\alpha}_1\;{\bphi_1}\;\exp(z_1x)\ .
\end{align}
Finally, by finding the coefficient $\alpha_1$ we can fully characterize $\boldsymbol{F}(t,x)$. This can be facilitated by leveraging the boundary condition $F_1(t,0)=0$, which yields
\begin{equation}
F_1(t,0)\;= \; \pi_1 \; + \; \alpha_1\;C_t\;=\; 0 \ ,
\end{equation}
where we have that $\pi_1=\frac{\lambda}{\lambda+\mu}$. Therefore
\begin{equation}\label{alphaValue}
\alpha_1\;=\; -\frac{\chi}{C_t(1+\chi)}\ ,
\end{equation}
which subsequently fully characterizes both cdfs $F_0(t,x)$ and $F_1(t,x)$ according to
\begin{eqnarray*}
F_{0}(t,x) & = & \pi_{0}+\alpha_1(1-C_t)\exp(z_1x)\\
\mbox{and}\quad F_{1}(t,x) & = & \pi_{1}+\alpha_1 C_t\exp(z_1x)\ .
\end{eqnarray*}
As a result, by recalling the definition of $F_i(t,x)$ in \eqref{eq:cdf}, the probability that the storage level $S_t$ falls below a target level $x$ is given by
\begin{equation}\label{mainResult}
\mathds{P}(S_t \leq x)\; = \;  {F_0}(x) + {F_1}(x) = 1 + \alpha_1 \exp(z_1x)\ .
\end{equation}
Given this closed-form characterization for the distribution of $S_t$, we can now evaluate the probability term
\begin{equation}\label{eq:P}
\mathds{P}(S_t>B)\ ,
\end{equation}
which is the core constraint in the storage sizing problem formalized in \eqref{eq:problem}. Specifically, for any instantaneous realization of $C_t$  denoted by $c$ we have
\begin{align}\label{eq:St}
\nonumber \mathds{P}(S_t>B) & = \int_{C_t}\mathds{P}(S_t>B\;|\; C_t=c)\;f_{C_t}(c)\;dc\\
\nonumber & =-\int_{C_t}\alpha_1\exp(z_1B)\;f_{C_t}(c)\;dc\\
\nonumber & =\int_{C_t}\frac{\chi}{c(1+\chi)}\exp\left( \frac{B\chi}{c}-\frac{B}{1 -c}\right)f_{C_t}(c)\;dc\ .
%\\
%\nonumber & \overset{\tiny \eqref{alphaValue}-\eqref{mainResult}}{=}
%\mathds{P}\Big(S_t<\sum_{i=1}^N\ell_i(t)-c_t\Big)\\
%& = 1 + \alpha_1 \exp \Big(z_1\sum_{i=1}^N\ell_i(t)-z_1c_t \Big) \ .
\end{align}
Therefore, by noting that $z_1=\frac{\chi}{c}-\frac{1}{1-c}$ is negative, the probability term $\mathds{P}(S_t>B)$ becomes strictly decreasing in $B$. Hence, the smallest storage capacity $B$ that satisfies the stochastic guarantee $\mathds{P}(S_t>B)\leq \varepsilon$ has a unique solution corresponding to which this constraint holds with equality. In the simplest settings in which grid capacity $C_t$ is constant $c$ we find
\begin{align}
B(\varepsilon) &= \frac{c(1-c)}{\chi - \chi c-c} \cdot \log\frac{\varepsilon c(1+\chi)}{\chi}\ .
\end{align}
\subsection{Multiuser Storage Capacity ($N>1$)}

In this subsection we provide a closed form for the probability term $\mathds{P}(S_t \leq x)$ for arbitrary values of $N$, which we denote by $F_{N}(x)$. Computing all $F_N(x)$ through computing its constituent terms $F_i(t,x)$, especially as $N$ grows, becomes computationally expensive, and possibly prohibitive as  it involves computing the eigenvalues and eigenvectors of $MD^{{-1}}$. By capitalizing on the observation that for large number of users $N\gg 1$, the largest eigenvalues are the main contributors to the probability distribution~\cite{morrison1989} shows that, the asymptotic expression for $F_N(x)$ is given by
%\vspace{-5pt}
\begin{align}\label{Nusers}
F_{N}(x) & = \frac{1}{2}\sqrt {\frac{u}{{\pi f(\varsigma )(\varsigma  + \lambda (1 - \varsigma ))N}}}\\\nonumber
& \;\;\times \exp(- N\varphi (\varsigma)- g(\varsigma )x)\\\nonumber
& \;\;\times \exp(- 2\sqrt {\left\{ {f(\varsigma )(\varsigma  + \lambda (1 - \varsigma ))Nx} \right\}})\\[-3mm]\nonumber
& \;\;\nonumber
\end{align}
where,
%\vspace{-5pt}
{\small
\begin{align*}
f(\varsigma ) & \triangleq \log \left( {\frac{\varsigma }{{\lambda (1 - \varsigma )}}} \right) - 2\frac{{\varsigma (1 + \lambda ) - \lambda }}{{\varsigma  + \lambda (1 - \varsigma )}\ ,}\\
u &  \triangleq  \frac{{\varsigma (1 + \lambda ) - \lambda }}{{\varsigma (1 - \lambda )}}\ ,\\
\varphi (\varsigma ) & \triangleq \varsigma \log (\varsigma ) + (1 - \varsigma )\log (1 - \varsigma ) - \varsigma \log (\varsigma ) + \log (1 + \lambda )\ ,\\
g(\varsigma ) & \triangleq k + 0.5\left( {\varsigma  + \lambda (1 - \varsigma )} \right)\frac{{\psi (1 - \varsigma )}}{{f(\varsigma )}}\ ,\\
k & \triangleq (1 - \lambda ) + \frac{{\lambda (1 - 2\varsigma )}}{{(\varsigma  + \lambda (1 - \varsigma ))}}\ ,\\
\psi  & \triangleq \frac{{(2\varsigma  - 1){{(\varsigma (1 + \lambda ) - \lambda )}^3}}}{{\varsigma {{(1 - \varsigma )}^2}{{(\varsigma  + \lambda (1 - \varsigma ))}^3}}}\ .
\end{align*}
}
In this set of equations, time is measured  in units of a single average ``On'' time ($1/\mu$). Furthermore, $\kappa$ and $\varsigma$ are defined as the ESS per user ($B/N$) and the grid power allocated per source, respectively. Furthermore, we denote the variable $\upsilon $ as the power above the mean demand allocated per user as $\upsilon  = \varsigma  - \frac{\lambda }{{1 + \lambda }}$.
\begin{comment}
\subsection{Time Varying System Capacity}
The proposed ESS sizing architecture bases on the premise that a constant power $C$ is drawn from the grid to charge the customers and the storage unit. Recall that the primary basis for this assumption is that deterministic quantity $C$ acts as a surrogate for the actual aggregated demand and it can be purchased from the day-ahead market. However, due to seasonal variations the peak hour demand may vary and the grid conditions may not allow system operator to always draw the same amount of power. The energy storage size, on the other hand, is a one time decision and changing the size may not be feasible. To that end, grid power $C$ could be averaged by looking into historical data, and the storage size can be determined to provide a long-term quality of service guarantees to customers.
\end{comment}

\section{Numerical Examples}\label{examples}
\begin{figure}[t]
\centering
\includegraphics[width=\columnwidth]{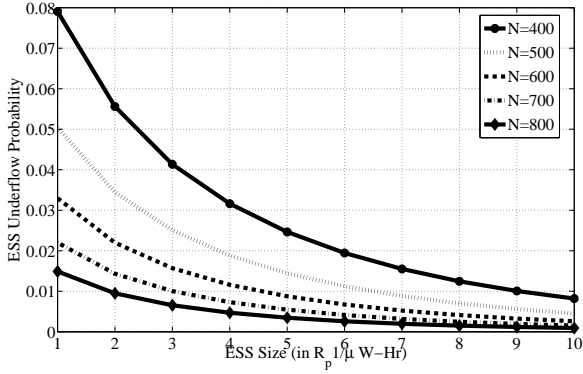}
 \caption{Community storage sizing for different user population.}\label{NvsESS}
% \vspace{-19pt}
\end{figure}
\begin{figure}[t]
\centering
\includegraphics[width=\columnwidth]{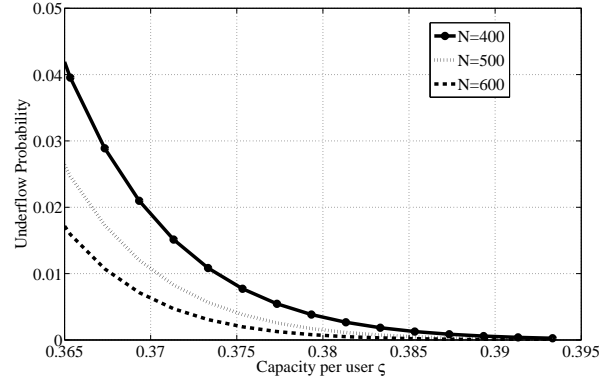}
 \caption{Community storage sizing for varying grid power. }\label{capacityOverFlow}
%\vspace{-19pt}
\end{figure}
\begin{figure}[t]
\centering
\includegraphics[width=\columnwidth]{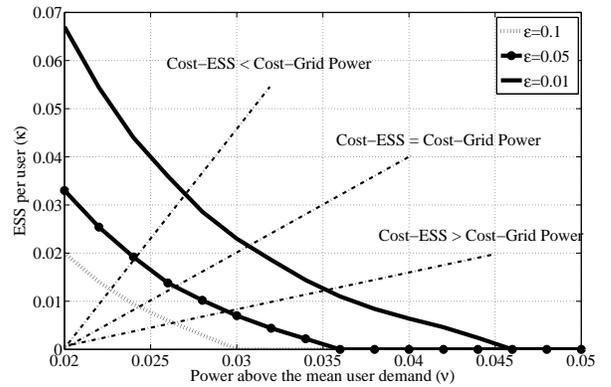}
 \caption{Power allocation (per user) above the mean user demand.}\label{BufferVsPower}
% \vspace{-16pt}
\end{figure}
\begin{figure}[t]
\centering
\includegraphics[width=\columnwidth]{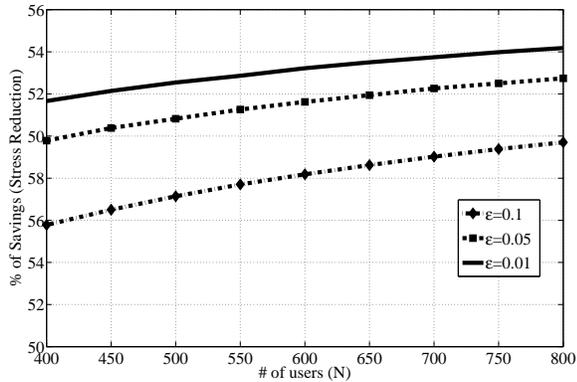}
 \caption{Percentage of savings in grid power.}\label{savings}

\end{figure}

In this section, we provide several numerical examples to explain the system dynamics and show how the proposed framework can be used in typical peak shaving applications. We use the aforementioned normalized values (unit time is measured in $\mu^{-1}$ and unit demand is measured in peak demand - $R_p$). We start by exploring the relations between the number of users, ESS size (in $R_p \mu^{-1}$ units) and the corresponding underflow probability for a given system capacity $C$.  Charge request rate per single user $\lambda$ is set to $2$ (two charge requests of size $R_p$ arrives in unit time), and the mean capacity above the mean demand per user is set to $\upsilon=0.035$. Then the total system capacity becomes $C=0.3683N$ units. In Fig.~\ref{NvsESS}, ESS sizing is evaluated for user population ($N$) from $400$ to $800$. This result can be used in various ways. First, given user population $N$, system operator can choose the ESS size according to a certain underflow probability. For instance, for a large scale EV charging facility (e.g., located in shopping mall, airports~\cite{sgc13}) with $N$=$400$ charging slots in order to accommodate $99\%$ of the customer demand the ESS size should be selected as $B=9\times10(=R_p)\times0.5 (=\mu^{-1})=45$ kWh. One important thing to notice is that as the user population increases the required ESS size reduces due to the increase in multiplexing gains. Another important observation is that instead of sizing the ESS to meet the entire customer demand, just by rejecting a few percentage of customers, great savings in the storage size, hence in terms of total system cost, can be achieved.

Another design consideration from the system operator point of view would be the following. Suppose that the system operator employs an already acquired ESS size of $B$=$5$, then she is interested in the amount of power to draw from the grid so that she can guarantee to meet certain level of demand (e.g., $99$\% etc.). To that end, the underflow probability for a range of system capacity per user $\varsigma$ and user population is evaluated in Fig.~\ref{capacityOverFlow}. Obviously as the capacity per user increases the underflow probability goes to zero. Similar to previous evaluation, as the number of  users increase, due to multiplexing gains the same percentage of customers can be accommodated with less amount of resources.

Next the relation between the ESS capacity and the grid power is investigated for a fixed number of users $N=500$. Each curve depicted in Fig.~\ref{BufferVsPower} represents a contour of underflow probability and the Buffer-Grid Power ($B$-$C$) combinations to reach the same underflow probabilities. Obviously in order to serve more customers (less $\epsilon$) more grid and ESS capacity are required. Moreover this result can be useful from financial analysis standpoint. For a specific project system (number of customers $N$, underflow probability) designer can analyze the unit cost of ESS and grid resources. Then the optimal combination of the ESS-Grid power can be obtained at the intersection of the cost curve and the contours given here. Three different cases for the cost are illustrated in Fig.~\ref{BufferVsPower}. As a future work, we are aiming to develop cost models for energy storage and power grid to optimally compute grid and ESS resources.

The primary motivation for the employment of the ESS is to reduce the stress on the grid and improve the utilization of power system components (e.g., power generation etc.). Thus, our final evaluation is on the percentage of reduction on the power grid for a fixed ESS size ($B$=$5$) and for different underflow probabilities. This time arrival rate is set to $\lambda$=$4$ and the comparison for varying arrival rates are done according to peak demand allocation. It can be seen in Fig.~\ref{savings} that multiplexing resources lead to great reduction on the power grid. In a similar manner, in Fig. \ref{savingsUser}, we compute the percentage of savings in ESS size with respect to $N=10$ users.

\begin{figure}[!t]
\centering
\includegraphics[width=\columnwidth]{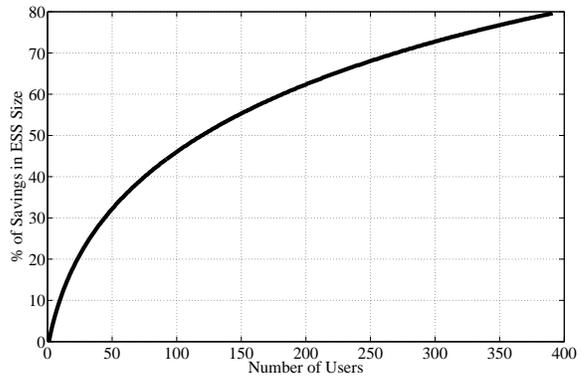}
 \caption{Percentage of savings in ESS size with respect to $N=10$ users.}\label{savingsUser}
 \vspace{-13pt}
\end{figure}
 \section{Conclusion}
In this paper we provided an analytical framework to size a sharing-based energy storage system for peak hour utility applications such as load leveling, peak shaving, and energy arbitrage. The analysis establishes the interplay among dynamic grid capacity, the number of consumers, and the guarantees levels for avoiding outage events. The analysis and simulation results exhibit substantial gains of community-level storage sharing compared to the settings in which the consumers have their dedicated storage units.

%\section*{Acknowledgment}
%This publication was made possible by NPRP grant \# 6-149-2-058 from the Qatar National Research Fund (a member of Qatar Foundation). The statements made herein are solely the responsibility of the authors.
% Can use something like this to put references on a page
% by themselves when using endfloat and the captionsoff option.
\ifCLASSOPTIONcaptionsoff
  \newpage
\fi

\bibliographystyle{IEEEtran}
\bibliography{energycon}
\vfill
\end{document}